\DeclareMathOperator{\tr}{\mathrm{tr}}
\DeclareMathOperator{\rank}{\mathrm{rank}}
\DeclareMathOperator{\K}{\mathcal{K}}
\DeclareMathOperator{\E}{\mathbb{E}}
\newtheorem{theorem}{Theorem}
\newtheorem{proof}{Proof}
\newtheorem{remark}{Remark}
\begin{document}
%
% paper title
% can use linebreaks \\ within to get better formatting as desired
\title{On the Interference Alignment Designs \\for Secure Multiuser MIMO Systems}

%\author{\IEEEauthorblockN{Tung T. Vu}
%\IEEEauthorblockA{Institute of Research and Development\\ Duy Tan University\\Da Nang, Vietnam\\ Faculty of Electrical \\and Electronics Engineering\\ HCMC University of Technology\\ Ho Chi Minh, Vietnam\\
%Email: vuthanhtung1@dtu.edu.vn}
%\and
%\IEEEauthorblockN{Ha Hoang Kha}
%\IEEEauthorblockA{Faculty of Electrical \\and Electronics Engineering\\HCMC University of Technology\\ Ho Chi Minh, Vietnam\\
%Email: hhkha@hcmut.edu.vn}
%\and
%\IEEEauthorblockN{Trung Q. Duong}
%\IEEEauthorblockA{Queen's University Belfast\\ Belfast, UK\\
%Email: trung.q.duong@qub.ac.uk}}

\author{Kha~Hoang~Ha,
        Thanh~Tung~Vu,
        Trung~Quang~Duong,
        and~Nguyen-Son~Vo
\thanks{K. H. Ha and T. T. Vu are with Ho Chi Minh City University of Technology, Vietnam (e-mail: hhkha@hcmut.edu.vn,\,\,tungvu.telecom@gmail.com).}% <-this % stops a space
\thanks{T. Q. Duong is with Queen's University Belfast, UK (e-mail: trung.q.duong@qub.ac.uk).}% <-this % stops a space
\thanks{N.-S. Vo is with Duy Tan University, Vietnam (e-mail: vonguyenson@dtu.edu.vn).}
}

% make the title area
\maketitle
%=====================================================
\begin{abstract}
\boldmath
In this paper, we propose two secure multiuser multiple-input multiple-output (MIMO)
 transmission approaches based on interference alignment (IA) in the presence of an eavesdropper. To deal with the information leakage to the eavesdropper as well as the interference signals from undesired transmitters (Txs) at desired receivers (Rxs), our approaches aim to design the transmit precoding and receive subspace matrices to minimize both the total inter-main-link interference and the wiretapped signals (WSs). The first proposed IA scheme focuses on aligning the WSs into proper subspaces while the second one imposes a new structure on the precoding matrices to force the WSs to zero. In each proposed IA scheme, the precoding matrices and the receive subspaces at the legitimate users are alternatively selected to minimize the cost function of a convex optimization problem for every iteration. We provide the feasible conditions and the proofs of convergence for both IA approaches. The simulation results indicate that our two IA approaches outperform the conventional IA algorithm in terms of the average secrecy sum rate.
\end{abstract}
\begin{IEEEkeywords}
Wiretap channels, secure communications, interference alignment, MIMO multiuser, transceiver designs.
\end{IEEEkeywords}

\IEEEpeerreviewmaketitle
%=====================================================
\section{Introduction}
\label{sec:introd}
Due to the broadcasting nature of wireless communications, the wireless data transmission among legitimate users is susceptible to be wiretapped by nearby eavesdroppers. Although cryptographic schemes at the network layer seem to reach the security demands, they might not be suitable for large networks, because of complex encryption and decryption algorithms, key distribution and management \cite{mukherjee14}.
An alternative approach called physical layer security (PLS) exploits the random characteristics of wireless channels to achieve perfect secrecy without requiring encryption keys.
As such, PLS has received considerable attention from the research community \cite{Barros11,Zhou13,Trap15}. Secure approaches for various contemporary systems have been investigated, including MIMO systems \cite{Zhang15,Zhang13,Oggier08,Mukherjee13}, relaying systems \cite{Chen15,Zhang12}, heterogeneous and large-scale networks \cite{Yang15,Pinto12,Zhou11,Lv15}.

In multiuser multiple-input-multiple-output (MIMO) systems, various signal processing techniques can be exploited to enhance secrecy performance \cite{Hanif14,Yang14,Lv16,Geraci12,Mukherjee09,Yang13}.
The authors in \cite{Hanif14,Yang14,Lv16,Geraci12} have generated the precoding matrices to directly maximize the secrecy sum rate (SSR) of the network while the authors in \cite{Yang13,Mukherjee09} have designed the artificial noise properly to degrade the received signals at the eavesdropper without causing additional noise on the intended receiver (Rx). The solutions in \cite{Yang13,Hanif14,Yang14,Geraci12,Mukherjee09} address the optimization problems of the SSR in the broadcast channels in which only one base station communicates with multiple users in the network. However, they do not study the problem of multiple transmitter-receiver (Tx-Rx) pair transmissions. It should be emphasized that the sum rate maximization of the MIMO interference channels with multiple user pairs is mathematically challenging even for the scenarios that do not take secure transmission into consideration.

Recently, the degree of freedom (DoF) has been recognized as an important performance metric for MIMO interference channels since it can characterize the system capacity in the high signal-to-noise region.
The achievable DoFs can be obtained by interference alignment (IA) \cite{cadambe08,Papa12}. IA techniques have been applied to various wireless networks such as cognitive radio, relay networks, and multicell wireless networks (\cite{du11,Chen14,Zhou13b,Gao14} and references therein) while only a limited amount of work has exploited IA for secure multiuser MIMO transmissions \cite{Koyl11,Sasaki12}. In particular, the authors in \cite{Koyl11} have proven that it is possible for each user to achieve a nonzero secure DoF by using an IA scheme. Reference \cite{Sasaki12} demonstrated that the secure transmission is also possible when the number of antennas at the legitimate Tx and Rx is greater than that of the eavesdropper. However, a detailed IA design for secure multisuer MIMO communication networks has not been explored in \cite{Sasaki12}. Alternatively, the authors in \cite{Ruan16,Lee14} have exploited cooperative jamming schemes with IA for secrecy communications.

The conventional IA technique has been well studied in MIMO systems but not in a security context  \cite{cadambe08,kumar10,peters09,Papa12}. The authors in \cite{cadambe08} proposed a scheme which makes the Rxs free from inter-main-link-interference (IMLI) by forcing IMLI into a reduced-dimensional subspace. However, closed-form solutions of the precoders with more than $3$ users appear to be critical. Hence, iterative techniques have been investigated in \cite{kumar10,peters09,Papa12}, in which the precoding matrices at the Txs and the interference subspace matrices at the Rxs are alternatively updated in each iteration. Compared with the scheme in \cite{cadambe08}, the iterative algorithm is more general and applicable for the interference channels with an arbitrary number of Tx-Rx pairs. Motivated by these works, we adopt the iterative approach to our IA designs.

There are two typical scenarios of secrecy communications in multiuser MIMO wireless networks \cite{Koyl11}. In the first scenario, the secure communication networks involve the passive or external eavesdroppers. In such networks, CSI of wiretap channels is generally not available. The second scenario is that some particular confidential information is not intended for particular users (namely the eavesdroppers) in the networks. In such a case, all users (including the eavesdroppers) belong to the same network and, therefore, all CSI can be reasonably assumed to be available for the secrecy strategy design \cite{Shafiee09,Khisti2010,Oggier2011,Hanif14,Yang14,Geraci12,Koyl11}. The present paper investigates the secrecy communication with the multiple legitimate users and one eavesdropper in the same network. Given the global CSI, the focus of the paper is to design the transmission strategies of the legitimate users to maximize the secrecy capacity.
In particular, we proposed two IA schemes to reduce the information leakage at the eavesdropper while keeping the legitimate Rxs free from IMLI. Our IA designs reveal significant differences with respect to the conventional IA design \cite{peters09}, as well as provide system design guidelines to improve the secrecy. The main contributions of the paper are summarized as follows:
\begin{enumerate}
  \item We propose the first IA scheme, referred to as a wiretapped signal leakage minimization (WSLM) method
  in which the legitimate users cooperatively seek an optimal transmission strategy to minimize both IMLI and the wiretapped signals in their receive subspaces. The transmit precoding matrices and the receive subspace matrices are alternatively selected to monotonically reduce the cost function.
  \item We propose the second IA scheme, namely a zero-forcing wiretapped signal (ZFWS) scheme in which the legitimate transmitters align their signals into the null space of the channel matrices associated with the eavesdropper while the IMLIs are minimized. By zero-forcing WSs at the eavesdropper, the precoders are constructed by a cascade of two precoding matrices in which one matrix is the null space of the wiretap channel while the other matrix is designed to satisfy the IA conditions for the legitimate links.
  \item We prove that our IA algorithms are converged. The numerical results show that the proposed algorithms are converged in less than $50$ iterations. The feasible conditions are also analyzed and showed that the feasible condition for the ZFWS IA scheme is more restrictive than that for the WSLM IA design.  The feasible conditions also imply that the secrecy of the proposed system can be higher when increasing the number of antennas at Txs and Rxs.
  \item Simulation results indicate that the two proposed IA designs outperform the conventional one in terms of the SSR. The ZFWS IA provides the better SSR improvement than the WSLM for the systems in which the feasible conditions of both IA methods are satisfied. However, if the feasible conditions of the ZFWS IA are not satisfied, the ZFWS IA is inferior to the WSLM scheme.
\end{enumerate}

The rest of this paper is organized as follows. Section \ref{sec:Model} introduces the multiuser MIMO system in the presence of an eavesdropper and reviews the conventional IA design. In Section \ref{Sec:IAMUMIMO}, our IA algorithms are proposed. The numerical results are provided in Section \ref{sec:Results}. Finally, Section \ref{sec:Conclusion} presents concluding remarks.

\emph{Notation}: $\pmb{X}$ and $\pmb{x}$ are respectively denoted as a complex matrix and a vector. $\pmb{X}^T$ and $\pmb{X}^H$ are the transposition and conjugate transposition of $\pmb{X}$, respectively. $\pmb{I}$ and $\pmb{0}$ are respectively identity and zero matrices with the appropriate dimensions. $\tr (.)$, $\rank (.)$ and $\E(.)$  are the trace, rank and expectation operators, respectively. $||\pmb{X}||_F$ is the Frobenius norm. $\pmb{x}\sim \mathcal{CN}(\bar{\pmb{x}},\pmb{R}_{\pmb{x}})$ means that $\pmb{x}$ is a complex Gaussian random vector with means $\bar{\pmb{x}}$ and covariance $\pmb{R}_{\pmb{x}}$. $\mathcal{X}^\bot$ is denoted as the orthogonal subspace of the subspace $\mathcal{X}$.

Our preliminary results have been reported in conference papers \cite{TungATC2015,TungCMT2015}. This paper presents a more complete version of the work.

%=================================================
\section{System Model and Conventional IA Design}
\label{sec:Model}

\subsection{System model of secure multiuser MIMO communication systems}

We consider a secure multiuser MIMO communication system as shown in Fig. \ref{fig:DownlinkMIMOIC}. The system consists of $K$ Tx-Rx pairs in the presence of an eavesdropper (namely, the $(K+1)$-th receiver) trying to overhear information from each Tx-Rx pair. The $k$-th Tx equipped with $M$ antennas sends $d$ data streams to the $k$-th Rx equipped with $N$ antennas, where $k \in \mathcal{K}=\{1,...,K\}$. We consider the flat-fading MIMO channels where the channel matrix from the $\ell$-th Tx to the $k$-th Rx is denoted as $\pmb{H}_{k,\ell} \in \mathbb{C}^{N \times M}$, and the channel matrix from the $\ell$-th Tx to the eavesdropper is denoted as $\pmb{H}_{K+1,\ell} \in \mathbb{C}^{N_e \times M}$, where $N_e$ is the number of antennas at the eavesdropper. The channel entries are independent and identically distributed (i.i.d.) complex Gaussian variables and their magnitudes follow the Rayleigh distribution. The channel is assumed to be block-fading which remains unchanged for a frame duration and varies independently for every frame. In general, it is difficult to obtain CSI of wiretap channels in the general secure communication networks involving the passive or external eavesdroppers. However, obtaining CSI for wiretap channels is possible for certain scenarios.
One scenario is that the eavesdropper is one of the legacy users in the network and tries to access unauthorized services \cite{Li11,Lv16}. Another scenario is that the eavesdropper is a subscribed user in the network but it is not intended for particular confidential information \cite{Oggier2011}. Therefore, we can assume that CSI of all main and wiretapped links is known at all the legitimate users. This assumption has been widely adopted in numerous related works; see \cite{Lv16,Koyl11,Fako11,Shafiee09,Khisti2010,Oggier2011,Hanif14,Yang14,Geraci12} and references therein.

\begin{figure}[htb!]
\centering
\includegraphics[keepaspectratio,width=2.8in]{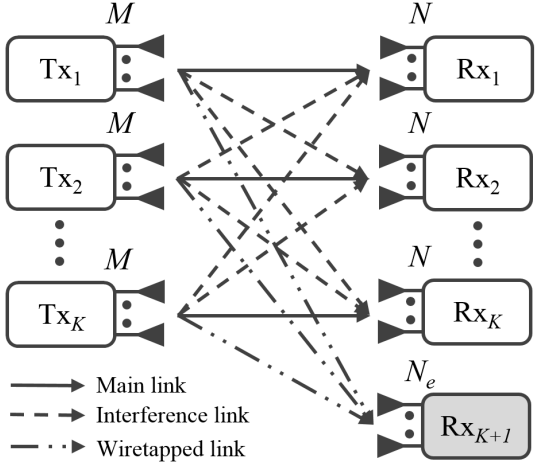}
\caption{A system model of secure multiuser MIMO communication networks.}
\label{fig:DownlinkMIMOIC}
\end{figure}
\vspace{+1cm}

In the multiuser MIMO system, the received signal $\pmb{y}_k \in \mathbb{C}^{N \times 1}$ at the $k$-th Rx can be expressed as
\begin{align}\label{Signal:Rx}
\pmb{y}_k = \sum\limits_{\ell\in\K} \pmb{H}_{k,\ell}\pmb{F}_{\ell}\pmb{x}_\ell  +\pmb{n}_k
\end{align}
where  $\pmb{x}_k \in \mathbb{C}^{d \times 1}$ is the signal vector transmitted from the $k$-th Tx to the $k$-th Rx, $\pmb{F}_k \in \mathbb{C}^{M \times d}$ is a precoding matrix at the $k$-th Tx and $\pmb{n}_k \sim \mathcal{CN}\left( 0,\sigma _k^2\pmb{I}_{N} \right)$ is a complex Gaussian noise vector at the $k$-th Rx. Without loss of generality, we assume that $\E\{ {{\pmb{x}_k}\pmb{x}_k^H} \} = {\pmb{I}_{{d}}}$ and, hence, the channel capacity at the $k$-th legitimate Rx, for all $k \in \mathcal{K}$, can be computed as \cite{Bazz12}
\begin{align}\label{Cap:Rxk}
\mathcal{R}_k = \log _2\left| \pmb{I}_{N} + \pmb{H}_{k,k}\pmb{F}_k\pmb{F}_k^H\pmb{H}_{k,k}^H\pmb{R}_{z_k}^{-1} \right|
\end{align}
where $\pmb{R}_{z_k} = \sum\limits_{\ell\in\K\setminus k} \pmb{H}_{k,\ell}{\pmb{F}_\ell\pmb{F}_\ell^H\pmb{H}_{k,\ell}^H}  + \sigma _k^2\pmb{I}_{N}$ is the interference plus noise correlation matrix at the $k$-th receiver. The eavesdropper (i.e., the $(K+1)$-th Rx) wiretaps the data signal from the $k$-th Tx-Rx pair and the rate of this wiretap channel can be given by
\begin{align}\label{Cap:WiretapRxk}
\mathcal{R}_k^{(e)} = \log _2\left| \pmb{I}_{N_e} + \pmb{H}_{K+1,k}\pmb{F}_k\pmb{F}_k^H\pmb{H}_{K+1,k}^H\pmb{R}_{e,k}^{-1} \right|
\end{align}
where $\pmb{R}_{e,k} = \sum\limits_{\ell \in \K \setminus k} \pmb{H}_{K+1,\ell}{\pmb{F}_\ell\pmb{F}_\ell^H\pmb{H}_{K+1,\ell}^H}  + \sigma _{K+1}^2\pmb{I}_{N_e}$ is the interference and noise correlation matrix with respect to the $k$-th desired signal at the eavesdropper. Now, the secrecy rate for the $k$-th Tx-Rx pair can be defined as \cite{Lv15,Lv16,Kong16}
\begin{align}\label{Cap:SRRxk}
\mathcal{R}_{S,k}  = [\mathcal{R}_k-\mathcal{R}_k^{(e)}]^{+},
\end{align}
where $[a]^{+}=\max(a,0)$. Then, the secure DoF (SDoF) of user $k$ can be defined as \cite{Lee14,Ruan16}
{\mathindent=0mm
\begin{equation*}
\text{SDoF}_k= \lim_{P_t \rightarrow \infty} \frac{\left[\mathcal{R}_k-\mathcal{R}^{(e)}_k \right]^+}{\log_2(P_t)}=[\text{dim}(\mathcal{W}_k)-\text{dim}(\mathcal{E}_k)]^+
\end{equation*}}
where $P_t$ is the transmitted power which is assumed to be the same at all the Txs; $\text{dim}(\mathcal{W}_k)$ is the dimension of the receive interference-free subspace at the legitimate receiver $k$ while $\text{dim}(\mathcal{E}_k)$ is the dimension of the receive interference-free subspace at the eavesdropper for wiretapping the signal from user $k$.
Accordingly, the SSR of the multiuser MIMO system can be expressed as \cite{Lv16,Geraci12}
\begin{align}\label{SSR}
\mathcal{R}_{S} = \sum\limits_{k\in\K}\mathcal{R}_{S,k} = \sum\limits_{k\in\K} [\mathcal{R}_k-\mathcal{R}_k^{(e)}]^{+},
\end{align}
which can be rewritten as
{\mathindent=0mm
\begin{equation}
\label{RsSumSDOF}
\mathcal{R}_S=(\sum_{k \in \mathcal{K}} [\text{dim}(\mathcal{W}_k)-\text{dim}(\mathcal{E}_k)]^+)\log_2(P_t)+o(\log_2(P_t)).
\end{equation}}
\noindent Eq.\eqref{RsSumSDOF} reveals the relationship between the SSR with the dimensions of the receive subspaces. Thus, the focus of our paper is to design the precoding and receive subspace matrices to increase the dimension of the desired signal subspace (by reducing the dimension of the interference subspace) at each legitimate user while simultaneously reducing the dimension of the wiretapped signal subspace.

\subsection{Conventional IA design for multiuser MIMO communication systems}
\label{IA:MUMIMO}
 In the conventional IA schemes for interference channels in \cite{peters09,Papa12}, there is no concern about the eavesdropper. The aim of the conventional IA design is to select the precoding $\left\{\pmb{F}_k\right\}_{k=1}^{K}$ and the receiving matrices $\left\{\pmb{W}_k\right\}_{k=1}^{K}$ to ensure no IMLI and to fully recover $d$ data streams at the intended Rx $k$. Mathematically, $\{\pmb{F}_k\}_{k=1}^K$ and $\{\pmb{W}_k\}_{k=1}^{K}$ satisfy the two following IA conditions
\begin{eqnarray}\label{ConventionalIA:SecrecyConditions1}
% \nonumber to remove numbering (before each equation)
  \mathrm{rank}\big(\pmb{W}_k^H\pmb{H}_{k,k}\pmb{F}_k\big) &=& d  \\\label{ConventionalIA:SecrecyConditions2}
  \pmb{W}_k^H\pmb{H}_{k,\ell}\pmb{F}_\ell &=& \mathbf{0},\,\, \forall \ell \neq k, \ell \in \mathcal{K}.
\end{eqnarray}
Condition \eqref{ConventionalIA:SecrecyConditions1} guarantees that the intended signal at the $k$-th Rx achieves $d$ DoFs when $\pmb{H}_{k,k}$ is full rank. Condition \eqref{ConventionalIA:SecrecyConditions2} ensures no IMLI at the $k$-th Rx.
To remove the interference signals at the intended Rx $k$, the interference signal $\pmb{H}_{k,\ell}\pmb{F}_\ell$ is aligned into the interference subspace $\mathcal{U}_k$ which is spanned by $\pmb{U}_k$ and orthogonal with the receiving signal subspace $\mathcal{W}_k$ spanned by $\pmb{W}_k$. Mathematically, the concept of matrix distance between a matrix $\pmb{A}$ and an orthonormal matrix $\pmb{U}$ is defined as $||\pmb{A}- \pmb{U}\pmb{U}^H\pmb{A}||_\emph{F}^2$ \cite{peters09}.
Hence, the total interference leakage can be expressed as
\begin{align}\label{ConventionalJ}
\nonumber
 &\mathfrak{J}_{convt}(\{\pmb{F}_k\}_{k=1}^K, \{\pmb{U}_k\}_{k=1}^K) =
 \\
 &\sum\limits_{k=1}^{K}\sum\limits_{\ell=1,\ell \neq k}^{K}
 ||\pmb{H}_{k,\ell}\pmb{F}_\ell - \pmb{U}_k\pmb{U}_k^H\pmb{H}_{k,\ell}\pmb{F}_\ell ||_\emph{F}^2.
\end{align}
Finally, the IA problem can be mathematically posed as
\begin{subequations}\label{ConventionalOptPro}
\begin{eqnarray}
 \underset{\{\pmb{F}_k\}_{k=1}^K,\{\pmb{U}_k\}_{k=1}^{K}}{\text{min}} && \mathfrak{J}_{convt}(\{\pmb{F}_k\}_{k=1}^K, \{\pmb{U}_k\}_{k=1}^K)  \\
 \text{s.t.}\qquad\,\,\,\,  && \pmb{F}_k^H\pmb{F}_k= \frac{P_t}{d}\pmb{I}; \,\,\, \forall k \in \mathcal{K} \\
  && \pmb{U}_k^H\pmb{U}_k = \pmb{I}; \,\,\, \forall k \in \mathcal{K}.
\end{eqnarray}
\end{subequations}
The solution of the conventional IA problem in \eqref{ConventionalOptPro} is solved via an alternating algorithm as follows \cite{peters09}

\emph{Transmitter precoding design}:
By holding $\left\{\pmb{U}_k\right\}_{k=1}^{K}$ fixed, the total interference at the $k$-th Rx in \eqref{ConventionalJ} is minimized with the proper precoders $\left\{\pmb{F}_k\right\}_{k=1}^{K}$. The precoders $\left\{\pmb{F}_k\right\}_{k=1}^{K}$ are selected by solving the following optimization problem
{\mathindent=0mm
\begin{subequations}\label{OptProF}
\begin{align}
 \underset{\pmb{F}_\ell}{\mathrm{min}} && &\mathrm{tr}\{\pmb{F}_\ell^H(\sum\limits_{k\in\K \setminus \ell}\pmb{H}_{k,\ell}^H(\pmb{I}- \pmb{U}_k\pmb{U}_k^H)\pmb{H}_{k,\ell})\pmb{F}_\ell\}\\
 \text{s.t.} && &\pmb{F}_\ell^H\pmb{F}_\ell = \frac{P_t}{d}\pmb{I}.
\end{align}
\end{subequations}}
Then, the optimal solution $\pmb{F}_\ell$  is given by \cite{Lutke97}.
\begin{align}\label{Fl:IA0}
  \pmb{F}_\ell = \sqrt{\frac{P_t}{d}}\zeta_{\min}^d\{\sum\limits_{k\in\K \setminus \ell}\pmb{H}_{k,\ell}^H(\pmb{I}- \pmb{U}_k\pmb{U}_k^H)\pmb{H}_{k,\ell}\}
\end{align}
where $\zeta_{\min}^d\left\{\pmb{X}\right\}$ is the matrix whose columns are the $d$ eigenvectors corresponding to the $d$ smallest eigenvalues of $\pmb{X}$.

\emph{Receiver interference subspace selection}:
When $\pmb{F}_\ell$ is fixed, the orthonormal basis matrix $\pmb{U}_k$ at the legitimate Rx $k$ is selected to ensure that most of interference falls into the interference subspace $\mathcal{U}_k$ spanned by $\pmb{U}_k$. In terms of  the matrix variable $\pmb{U}_k$, the optimization problem \eqref{ConventionalOptPro} can be recasted as
\begin{subequations}\label{ConventionalOptProUk}
\begin{eqnarray}
 \underset{\pmb{U}_k}{\text{max}} && \mathrm{tr}\{\pmb{U}_k^H(\sum\limits_{\ell\in\K\setminus k}\pmb{H}_{k,\ell}\pmb{F}_\ell\pmb{F}_\ell^H\pmb{H}_{k,\ell}^H)\pmb{U}_k\} \\
 \text{s.t.} && \pmb{U}_k^H\pmb{U}_k = \pmb{I}.
\end{eqnarray}
\end{subequations}
Hence, $\pmb{U}_k$ can be given by \cite{Lutke97}
\begin{align}\label{ConventionalUk}
  \pmb{U}_k = \zeta_{\max}^{N-d}\{\sum\limits_{\ell\in\K\setminus k}\pmb{H}_{k,\ell}\pmb{F}_\ell\pmb{F}_\ell^H\pmb{H}_{k,\ell}^H\}
\end{align}
where $\zeta_{\max}^{N-d}\left\{\pmb{X}\right\}$ is the matrix whose columns are the $(N-d)$ dominant eigenvectors of matrix $\pmb{X}$.

It is worth noting that the conventional IA only aims at minimizing the total interference leakage which falls into a desired signal subspace in the insecure context. In the next section, we proposed two IA schemes for secure multiuser MIMO communications. The objective is not only to minimize the total interference leakage at the intended receivers but also to minimize the information bearing signal leakage at the eavesdropper. In addition, we derive the conditions on the number of users and antennas to guarantee the feasibility of secrecy communications.

\section{Interference Alignment for Secure Multiuser MIMO Communication Systems}
\label{Sec:IAMUMIMO}
In this section, we propose two IA schemes for secure multiuser MIMO communication systems. In addition to satisfying the conventional IA requirements for the legitimate users, the proposed schemes impose additional constraints to enhance the SSR. The first IA scheme, known as WSLM, aims to align all the information signals into a reduced dimensional subspace at the eavesdropper while the second one nulls out the signals at the eavesdropper.

\subsection{WSLM for secrecy multiuser MIMO communication systems}
\label{Sec:IAMUMIMO1}
To degrade the wiretap channel capacity, the underlaying idea is to align the signals from the legitimate users into a lower dimensional subspace at the eavesdropper. Similar to the conventional IA design, to remove the interference at the intended Rx $k$, we also align the interference signal $\pmb{H}_{k,\ell}\pmb{F}_\ell$ into the interference subspace $\mathcal{W}_k^{\perp}$ which is spanned by the orthonormal basis matrix $\pmb{U}_k$.
Hence, the total interference leakage inside the receiving subspace at the legitimate Rxs is defined by
\begin{align}\label{J1}
\nonumber
 &\mathfrak{J}_1(\{\pmb{F}_k\}_{k=1}^K, \{\pmb{U}_k\}_{k=1}^K)
 \\
 \nonumber
 &= \sum\limits_{k\in\K}\sum\limits_{\ell\in\K\setminus k}
 ||\pmb{H}_{k,\ell}\pmb{F}_\ell - \pmb{U}_k\pmb{U}_k^H\pmb{H}_{k,\ell}\pmb{F}_\ell ||_\emph{F}^2
 \\
 &=  \sum\limits_{k=\in\K}\sum\limits_{\ell\in\K\setminus k}
  \mathrm{tr}\left\{\pmb{F}_\ell^H\pmb{H}_{k,\ell}^H(\pmb{I}- \pmb{U}_k\pmb{U}_k^H)\pmb{H}_{k,\ell}\pmb{F}_\ell\right\}.
\end{align}
Note that the legitimate users do not have the knowledge of the wiretapped signal subspaces $\mathcal{E}_k$. To reduce the wiretapped information for all users, the legitimate users cooperatively seek a smallest dimensional subspace $\mathcal{E}$ at the eavesdropper in which the signals are most aligned. Assume that the subspace $\mathcal{E}$ at the eavesdropper is spanned by the orthonormal basis matrix $\pmb{U}_{K+1}$.  It should be emphasized that  $\mathcal{E}$ is appropriately selected by the legitimate transmitters. It is a subspace that all the signals can be most superimposed each other. The eavesdropper's receiver is not necessary to rely on $\mathcal{E}$. Mathematically, we aim to minimize
{\mathindent=0mm
\begin{align}
%\label{J2:1}
\nonumber
 &\mathfrak{J}_2(\{\pmb{F}_k\}_{k=1}^K, \pmb{U}_{K+1})
 \\
 %&= \sum\limits_{k=1}^{K}  ||\pmb{W}_{K+1}\pmb{W}_{K+1}^H\pmb{H}_{K+1,\ell}\pmb{F}_\ell ||_\emph{F}^2 \\
 &= \sum\limits_{k=1}^{K}||\pmb{H}_{K+1,\ell}\pmb{F}_\ell - \pmb{U}_{K+1}\pmb{U}_{K+1}^H\pmb{H}_{K+1,\ell}\pmb{F}_\ell ||_\emph{F}^2 \nonumber
 \\
 &=\sum\limits_{\ell=1}^{K}
  \mathrm{tr}\left\{\pmb{F}_\ell^H\pmb{H}_{K+1,\ell}^H(\pmb{I}- \pmb{U}_{K+1}\pmb{U}_{K+1}^H)\pmb{H}_{K+1,\ell}\pmb{F}_\ell\right\}. \label{J2:2}
\end{align}}
Then, the total signal leakage can be defined as
{\mathindent=1mm
\begin{align}
\nonumber
&\mathfrak{J}(\{\pmb{F}_k\}_{k=1}^K, \{\pmb{U}_k\}_{k=1}^{K+1}) =
\\
&\mathfrak{J}_1(\{\pmb{F}_k\}_{k=1}^K, \{\pmb{U}_k\}_{k=1}^K)+\mathfrak{J}_2(\{\pmb{F}_k\}_{k=1}^K, \pmb{U}_{K+1}).
\end{align}}
Accordingly, the IA problem can be expressed as the following joint optimization problem
{\mathindent=1mm
\begin{subequations}\label{OptProIA1}
\begin{align}\label{OptProIA1:CF}
\underset{\{\pmb{F}_k\}_{k=1}^K,\{\pmb{U}_k\}_{k=1}^{K+1}}{\text{min}} &\mathfrak{J}(\{\pmb{F}_k\}_{k=1}^K, \{\pmb{U}_k\}_{k=1}^{K+1})
\\
\text{s.t.} \qquad \,\,\,&\pmb{F}_k^H\pmb{F}_k = \frac{P_t}{d}\pmb{I}; \,\,\, \forall k \in \mathcal{K}
\\
&\pmb{U}_k^H\pmb{U}_k = \pmb{I};  \forall k \in \mathcal{K}\cup\{K+1\}.
\end{align}
\end{subequations}}
We now derive the solution of problem \eqref{OptProIA1} via alternating minimization \cite{peters09}. First, the variables $\{\pmb{U}_k\}_{k=1}^{K+1}$ are temporarily fixed, the objective function can be optimized with the variables $\{\pmb{F}_k\}_{k=1}^K$ and, then, we alternate between the two set of variables. Specifically, the key ideas in our proposed algorithm are explained in the following.

\emph{Transmitter precoding design}:
Firstly, the $k$-th Tx adjusts $\pmb{F}_k$ to guarantee the interference signal falling into the interference subspace $\mathcal{U}_k$ at the $k$-th Rx. When $\{\pmb{U}_k\}_{k=1}^{K+1}$ are fixed, for finding optimal $\pmb{F}_\ell$, problem \eqref{OptProIA1} equivalently reduces to
{\mathindent=0mm
\begin{subequations}\label{OptProF:IA1}
\begin{align}
 \underset{\pmb{F}_\ell}{\text{min}} && &\mathrm{tr}\{\pmb{F}_\ell^H(\sum\limits_{k=1,k \neq \ell}^{K+1}\pmb{H}_{k,\ell}^H(\pmb{I}- \pmb{U}_k\pmb{U}_k^H)\pmb{H}_{k,\ell})\pmb{F}_\ell\}
 \\
 \text{s.t.} && &\pmb{F}_\ell^H\pmb{F}_\ell = \frac{P_t}{d}\pmb{I}; \,\,\, \forall \ell \in \mathcal{K}.
\end{align}
\end{subequations}}
The solution to problem \eqref{OptProF:IA1} is given by \cite{Lutke97}
{\mathindent=1mm
\begin{align}\label{Fl:IA1}
  \pmb{F}_\ell = \sqrt{\frac{P_t}{d}}\zeta_{\min}^d\{\sum\limits_{k=1,k \neq \ell}^{K+1}\pmb{H}_{k,\ell}^H(\pmb{I}- \pmb{U}_k\pmb{U}_k^H)\pmb{H}_{k,\ell}\}.
\end{align}}

\emph{Receiver interference subspace selection}:
Secondly, when $\pmb{F}_\ell$ is fixed, similar to the conventional IA design, the orthonormal basis matrix $\pmb{U}_k$ at the legitimate Rx $k$ is selected for each Rx $k$ as in \eqref{ConventionalUk}. To deal with wiretapped signals, we define $\mathcal{E}^\bot$ spanned by an orthonormal matrix $\pmb{E} \in \mathcal{}^{N_e\times n}$ as an orthogonal subspace of the subspace $\mathcal{E}$. Since $\pmb{E}$ is an orthonormal basis matrix, $\mathrm{rank}\left(\pmb{E}\right)=\mathrm{rank}\left(\pmb{E}^H\right)=n$. The design of interest is to align the signals into the subspace $\mathcal{E}$ and, thus, we have the following conditions
\begin{equation}\label{IA:SecrecyConditions3}
\pmb{E}^H\pmb{H}_{K+1,\ell}\pmb{F}_\ell=\pmb{0}, \quad \ell \in \mathcal{K}.
\end{equation}
That is,  $\pmb{H}_{K+1,\ell}\pmb{F}_\ell \in \mathbb{C}^{N_e\times d}$ must lie in the null space of $\pmb{E}^H$. Hence, the existence condition of $\pmb{E}^H$ is $N_e-n\geq d$. Thus, we consider the case that $n=N_e-d$ for the dimension of the null space of $\pmb{E}^H$ to be smallest, to make the eavesdropper harder to recover signals \footnote{Note that different from the conventional IA scheme in which the receive subspace is tightly related to its corresponding receiver, the subspace $\mathcal{E}$ herein is selected by the legitimate users and is not related to the eavesdropper's receiver. The aim of the legitimate users is to find the subspace $\mathcal{E}$ such that their signals can be most aligned into the subspace $\mathcal{E}$ without regarding the eavesdropper's receiver.}. Then, the matrix size of $\pmb{U}_{K+1}$ must be $N_e\times d$. From \eqref{J2:2}, the orthonormal basis $\pmb{U}_{K+1}$ can be found by solving the following optimization problem
{\mathindent=1mm
\begin{subequations}\label{OptProUe}
\begin{align}
 \underset{\pmb{U}_{K+1}}{\text{max}} \,\,
 &\mathrm{tr}\{\pmb{U}_{K+1}^H(\sum\limits_{\ell\in\K}\pmb{H}_{K+1,\ell}\pmb{F}_\ell\pmb{F}_\ell^H\pmb{H}_{K+1,\ell}^H)\pmb{U}_{K+1}\} \\
 \text{s.t.} \,\,\,&\pmb{U}_{K+1}^H\pmb{U}_{K+1} = \pmb{I}_d.
\end{align}
\end{subequations}}
The solution to \eqref{OptProUe} can be found as follows \cite{Lutke97}
\begin{align}\label{Ue}
  \pmb{U}_{K+1} = \zeta_{\max}^d \{\sum\limits_{\ell\in\K}\pmb{H}_{K+1,\ell}\pmb{F}_\ell\pmb{F}_\ell^H\pmb{H}_{K+1,\ell}^H\}.
\end{align}
The step-by-step iterative algorithm for the WSLM method can be shown in Algorithm \ref{alg2}.

\begin{algorithm}[ht]
\caption{: Proposed  WSLM IA Algorithm for Secure Multiuser MIMO Systems}\label{IterIA}
\begin{algorithmic}[1]\label{alg2}
\STATE Inputs: $d,\pmb{H}_{k,\ell}$,  $\forall k \in \mathcal{K}\cup\{K+1\}$, $\forall \ell \in \mathcal{K}$, $\kappa=0$, $\kappa_{\max }$, where $\kappa$ is the iteration index;
\STATE Initial variables: random matrix $\{\pmb{F}_k^{(0)}\}_{k = 1}^{K}$ satisfied $\pmb{F}_k^{(0)H}\pmb{F}_k^{(0)}=\sqrt{\frac{P_t}{d}}\pmb{I}_d$; then select $\{\pmb{U}_k^{(0)}\}_{k = 1}^{K+1}$ from \eqref{ConventionalUk} and \eqref{Ue};
\STATE Evaluate the objective function $\mathfrak{J}(\{\pmb{F}_k^{(0)}\}_{k=1}^K, \{\pmb{U}_k^{(0)}\}_{k=1}^{K+1})$ from \eqref{OptProIA1};
\WHILE{$\kappa < \kappa_{\max}$}
\STATE For fixed $\{\pmb{U}_k^{(\kappa)}\}_{k = 1}^{K+1}$, select $\{\pmb{F}_k^{(\kappa+1)}\}_{k = 1}^K$ from \eqref{Fl:IA1};
\STATE For fixed $\{\pmb{F}_k^{(\kappa+1)}\}_{k = 1}^K$, select $\{\pmb{U}_k^{(\kappa+1)}\}_{k = 1}^{K+1}$ from \eqref{ConventionalUk} and \eqref{Ue};
\STATE Evaluate the objective function $\mathfrak{J}(\{\pmb{F}_k^{(\kappa+1)}\}_{k=1}^K, \{\pmb{U}_k^{(\kappa+1)}\}_{k=1}^{K+1})$ from \eqref{OptProIA1};
\STATE $\kappa=\kappa+1$;
\STATE Repeat steps 5-8 until convergence.
\ENDWHILE
\end{algorithmic}
\end{algorithm}

\emph{Convergence analysis for the WSLM IA design}:
The convergence of the proposed WSLM IA scheme can be stated in the following theorem.
\begin{theorem}\label{thrmalg1}
The proposed WSLM IA in Algorithm \ref{alg2} for secure multiuser MIMO communication networks converges monotonically.
\end{theorem}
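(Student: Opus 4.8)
The plan is to establish convergence by the standard block-coordinate (alternating) minimization argument: show that the objective $\mathfrak{J}$ is bounded below and that neither of the two update steps of Algorithm~\ref{alg2} ever increases it, so that the generated sequence of objective values is monotone non-increasing and bounded, hence convergent. First I would record that $\mathfrak{J}(\{\pmb{F}_k\}_{k=1}^K,\{\pmb{U}_k\}_{k=1}^{K+1})\ge 0$ at every feasible point, since by construction it is a sum of squared Frobenius norms, equivalently a sum of traces of positive semidefinite quadratic forms, because each $\pmb{I}-\pmb{U}_k\pmb{U}_k^H$ is an orthogonal projector and therefore positive semidefinite. This supplies the lower bound.

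Next I would verify the decoupling of the objective across the two variable blocks, which is what turns the algorithm's separate subproblems into genuine block minimizations of the full cost. Reorganizing $\mathfrak{J}_1+\mathfrak{J}_2$ by the transmitter index $\ell$ gives $\mathfrak{J}=\sum_{\ell\in\K}\tr\{\pmb{F}_\ell^H(\sum_{k=1,k\neq\ell}^{K+1}\pmb{H}_{k,\ell}^H(\pmb{I}-\pmb{U}_k\pmb{U}_k^H)\pmb{H}_{k,\ell})\pmb{F}_\ell\}$, in which each $\pmb{F}_\ell$ appears in its own summand. Hence, for fixed $\{\pmb{U}_k\}$, solving the per-transmitter problem \eqref{OptProF:IA1} for every $\ell$ simultaneously minimizes $\mathfrak{J}$ over the whole precoder block subject to the power constraints, and the eigenvector formula \eqref{Fl:IA1} attains this global minimum by the Rayleigh--Ritz / Ky~Fan extremal characterization cited in \cite{Lutke97}. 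This yields $\mathfrak{J}(\{\pmb{F}_k^{(\kappa+1)}\},\{\pmb{U}_k^{(\kappa)}\})\le\mathfrak{J}(\{\pmb{F}_k^{(\kappa)}\},\{\pmb{U}_k^{(\kappa)}\})$.

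Symmetrically, with $\{\pmb{F}_\ell\}$ held fixed I would rewrite $\mathfrak{J}$ so that each $\pmb{U}_k$ ($k\in\K$) and $\pmb{U}_{K+1}$ enters only through the variable terms $-\tr\{\pmb{U}_k^H(\sum_{\ell\neq k}\pmb{H}_{k,\ell}\pmb{F}_\ell\pmb{F}_\ell^H\pmb{H}_{k,\ell}^H)\pmb{U}_k\}$ and $-\tr\{\pmb{U}_{K+1}^H(\sum_{\ell\in\K}\pmb{H}_{K+1,\ell}\pmb{F}_\ell\pmb{F}_\ell^H\pmb{H}_{K+1,\ell}^H)\pmb{U}_{K+1}\}$ respectively, the remaining terms being constant in the subspace variables. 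Since the subspace matrices fully decouple, minimizing $\mathfrak{J}$ over the subspace block reduces to the independent trace-maximization problems \eqref{ConventionalOptProUk} and \eqref{OptProUe}, whose global maximizers are exactly the dominant-eigenvector matrices \eqref{ConventionalUk} and \eqref{Ue}. This gives $\mathfrak{J}(\{\pmb{F}_k^{(\kappa+1)}\},\{\pmb{U}_k^{(\kappa+1)}\})\le\mathfrak{J}(\{\pmb{F}_k^{(\kappa+1)}\},\{\pmb{U}_k^{(\kappa)}\})$. Chaining the two inequalities shows $\mathfrak{J}$ is non-increasing across a full iteration, and combined with $\mathfrak{J}\ge 0$ the monotone convergence theorem for real sequences guarantees that $\{\mathfrak{J}^{(\kappa)}\}$ converges, proving monotone convergence.

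The step I expect to be the main obstacle is not the monotonicity bookkeeping but justifying that the subproblems \emph{globally} minimize the full objective over their respective blocks, rather than merely returning a stationary point. I would make this explicit by invoking the separability above together with the Ky~Fan extremal principle: trace minimization (maximization) of a positive semidefinite quadratic form over the Stiefel-type constraint $\pmb{F}_\ell^H\pmb{F}_\ell=\tfrac{P_t}{d}\pmb{I}$ (resp.\ $\pmb{U}_k^H\pmb{U}_k=\pmb{I}$) is globally solved by stacking the $d$ smallest (resp.\ largest) eigenvectors. I would also note that the feasible sets are compact so the extrema are attained, and caution that convergence of the objective values does not by itself force convergence of the iterates $\{\pmb{F}_k,\pmb{U}_k\}$, which is more than the theorem claims.
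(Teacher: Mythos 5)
Your proof is correct and follows essentially the same route as the paper's own argument: both establish that each alternating update (precoders for fixed subspaces, then subspaces for fixed precoders) cannot increase $\mathfrak{J}$, and combine this monotone decrease with the lower bound $\mathfrak{J}\ge 0$ to conclude convergence. Your version is in fact more careful than the paper's, since you explicitly verify the block decoupling and the global optimality of each subproblem via the Ky~Fan extremal principle, and you correctly flag that only the objective values, not the iterates, are shown to converge.
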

\begin{proof}
In the $\kappa$-th iteration, by holding $\{\pmb{U}_k^{(\kappa)}\}_{k=1}^{K+1}$ fixed, in step $5$, the optimal solution $\{\pmb{F}_k^{(\kappa+1)}\}_{k=1}^K$ obtained from \eqref{Fl:IA1} minimizes $\mathfrak{J}(\{\pmb{F}_k\}_{k=1}^K, \{\pmb{U}_k^{(\kappa)}\}_{k=1}^{K+1})$ in \eqref{OptProIA1} and, thus,
\begin{equation}\label{conve1}
%\mathfrak{J}(\{\pmb{F}_k^{(\kappa+1)}\}_{k=1}^K, \{\pmb{U}_k^{(\kappa)}\}_{k=1}^{K+1}) \leq \mathfrak{J}(\{\pmb{F}_k^{(\kappa)}\}_{k=1}^K, \{\pmb{U}_k^{(\kappa)}\}_{k=1}^{K+1}).
\mathfrak{J}(\{\pmb{F}_k^{(\kappa+1)}, \{\pmb{U}_k^{(\kappa)}\}) \leq \mathfrak{J}(\{\pmb{F}_k^{(\kappa)}\}, \{\pmb{U}_k^{(\kappa)}\}).
\end{equation}
 Next, in step $6$, by holding $\{\pmb{F}_k^{(\kappa+1)}\}_{k=1}^K$ fixed, the optimal solution $\{\pmb{U}_k^{(\kappa+1)}\}_{k=1}^{K+1}$ obtained from \eqref{ConventionalUk} minimizes $\mathfrak{J}(\{\pmb{F}_k^{(\kappa+1)}\}_{k=1}^K, \{\pmb{U}_k\}_{k=1}^{K+1})$, i.e.,
\begin{align}\label{conve2}
%\mathfrak{J}(\{\pmb{F}_k^{(\kappa+1)}\}_{k=1}^K, \{\pmb{U}_k^{(\kappa+1)}\}_{k=1}^{K+1})
%\leq \mathfrak{J}(\{\pmb{F}_k^{(\kappa+1)}\}_{k=1}^K, \{\pmb{U}_k^{(\kappa)}\}_{k=1}^{K+1}).
\mathfrak{J}(\{\pmb{F}_k^{(\kappa+1)}\}, \{\pmb{U}_k^{(\kappa+1)}\})
\leq \mathfrak{J}(\{\pmb{F}_k^{(\kappa+1)}\}, \{\pmb{U}_k^{(\kappa)}\}).
\end{align}
The combination of Eqs. \eqref{conve1} and \eqref{conve2} yields
 \begin{equation}
%\mathfrak{J}(\{\pmb{F}_k^{(\kappa+1)}\}_{k=1}^K, \{\pmb{U}_k^{(\kappa+1)}\}_{k=1}^{K+1}) \leq \mathfrak{J}(\{\pmb{F}_k^{(\kappa)}\}_{k=1}^K, \{\pmb{U}_k^{(\kappa)}\}_{k=1}^{K+1}),
\mathfrak{J}(\{\pmb{F}_k^{(\kappa+1)}\}, \{\pmb{U}_k^{(\kappa+1)}\}) \leq \mathfrak{J}(\{\pmb{F}_k^{(\kappa)}\}, \{\pmb{U}_k^{(\kappa)}\}).
\end{equation}
That is, the cost function $\mathfrak{J}$ in \eqref{OptProIA1} is reduced monotonically over iteration. In addtion, the cost function is bounded below by zero. Thus, the convergence of the first proposed IA algorithm is guaranteed.
\end{proof}

\emph{Feasible condition for the proposed WSLM IA design}:
The feasibility of the IA scheme is tightly related to the properness of the system \cite{Yetis10}. The MIMO interference channels are known as proper systems as the number of variables $N_v$ is greater than or equal to the number of equations $N_{eq}$ in the IA conditions \eqref{ConventionalIA:SecrecyConditions2} and \eqref{IA:SecrecyConditions3}. The proper systems surely render the feasibility of the IA problems. To count $N_{eq}$ and $N_v$, we rewrite Eq. \eqref{ConventionalIA:SecrecyConditions2} and \eqref{IA:SecrecyConditions3} as follows
{\mathindent=1mm
\begin{eqnarray}\label{IA:SecrecyConditions22}
% \nonumber to remove numbering (before each equation)
  \pmb{w}_k^{[i]H}\pmb{H}_{k,\ell}\pmb{f}_\ell^{[j]} = \mathbf{0};\,\, \forall \ell \neq k, \ell \in \mathcal{K},\forall i \in \mathcal{I},\forall j \in \mathcal{K} \\\label{IA:SecrecyConditions33}
  \pmb{e}^{[m]H}\pmb{H}_{K+1,\ell}\pmb{f}_\ell^{[j]} = \mathbf{0}; \,\, \forall \ell \in \mathcal{K},\forall m \in \mathcal{M},\forall j \in \mathcal{K}
\end{eqnarray}}
where $\pmb{f}_\ell^{[j]}$, and $\pmb{w}_k^{[i]}$, $\pmb{e}^{[m]}$ is the transmit beamforming vectors and column vectors of basis matrices at the Tx, Rx, and eavesdropper, respectively, with $\mathcal{I}=\{1,\dots,d\}$ and $\mathcal{M}=\{1,\dots,N_e-d\}$.
In the proposed IA design, $N_{eq}$ is directly given from \eqref{IA:SecrecyConditions22} and \eqref{IA:SecrecyConditions33} as follows
\begin{align}\label{Neq}
  N_{eq} &= K(K-1)d^2+K(N_e-d)d.
\end{align}

The number of variables designed for any precoder or subspace matrix $\pmb{V}\in \mathbb{C}^{m \times n}$ is proven to be equal to $n(m-n)$ \cite{Yetis10}. Hence, the number of variables to be designed for precoder $\pmb{F}_\ell$, subspace matrices $\pmb{W}_k$ and $\pmb{E}$ equals $d(M-d)$, $d(N-d)$ and $d(N_e-d)$, respectively. Finally, the total number of variables in the proposed IA conditions \eqref{IA:SecrecyConditions22} and \eqref{IA:SecrecyConditions33} can be given as
\begin{align}\label{Nv}
  N_v = Kd\left(M+N-2d\right)+d(N_e-d).
\end{align}
Denote $\left(M\times N,N_e,d\right)^K$ as the system where there are $K$ Tx-Rx pairs and one eavesdropper; the Tx, Rx and eavesdropper are respectively equipped with $M$, $N$, $N_e$ antennas; and $d$ data streams are sent between each Tx-Rx pair.
The feasible condition for the proposed IA design in secure communications can be stated in Theorem \ref{thrmalg2}.

\begin{theorem}\label{thrmalg2}
The IA scheme in Algorithm \ref{alg2} for the $\left(M\times N,N_e,d\right)^K$ system with secure communication is feasible if $N_v\geq N_{eq}$, i.e., \begin{align}\label{feasibleIA1condition}
  K(M+N)-\left(K^2+1\right)d \geq N_e(K-1).
\end{align}
\end{theorem}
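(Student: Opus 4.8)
The plan is to treat Theorem~\ref{thrmalg2} as a direct algebraic consequence of the properness criterion already established just above the statement, rather than as an independent feasibility argument. The framework of \cite{Yetis10} declares a MIMO interference channel proper---and hence feasible---whenever the number of free design variables is at least the number of scalar nulling equations, i.e.\ $N_v \geq N_{eq}$. Since the paper has already derived the closed-form counts $N_{eq}$ in \eqref{Neq} and $N_v$ in \eqref{Nv}, the entire content of the theorem reduces to verifying that substituting these two expressions into $N_v \geq N_{eq}$ and simplifying reproduces the displayed inequality \eqref{feasibleIA1condition}.

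Concretely, I would first recall how the two counts arise, so the substitution is transparent: the $K(K-1)$ ordered pairs of distinct main links each impose $d^2$ scalar conditions from \eqref{IA:SecrecyConditions22}, and the $K$ wiretap-alignment constraints from \eqref{IA:SecrecyConditions33} each contribute $(N_e-d)d$ scalar conditions, giving $N_{eq}$; meanwhile the Grassmannian dimension $n(m-n)$ for an $m\times n$ orthonormal matrix counts the free parameters in the $K$ precoders $\pmb{F}_\ell$, the $K$ receive subspaces $\pmb{W}_k$, and the single eavesdropper subspace $\pmb{E}$, giving $N_v$. I would then write $N_v \geq N_{eq}$ explicitly, divide through by the positive integer $d$, expand both sides, cancel the common $-2Kd$ term, and collect the remainder. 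What survives is exactly $K(M+N)-(K^2+1)d \geq N_e(K-1)$, which is \eqref{feasibleIA1condition}.

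The step I expect to carry the real weight is not the algebra, which is routine bookkeeping, but the logical jump that properness ($N_v \geq N_{eq}$) \emph{implies} feasibility. In general this implication is delicate: \cite{Yetis10} shows that properness is necessary for feasibility and sufficient only under genericity and suitable symmetry of the channel. Thus the substantive justification rests on invoking that result for the present $\left(M\times N,N_e,d\right)^K$ setting, after which the theorem is merely the arithmetic restatement of the counting inequality. Given the paper's declared reliance on \cite{Yetis10} and the i.i.d.\ Gaussian (hence generic) channel assumption stated in the system model, I would cite that reference for the properness-implies-feasibility direction and devote the written proof to the deterministic simplification above.
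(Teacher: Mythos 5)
Your proposal is correct and follows essentially the same route as the paper: the paper's proof is precisely the one-line comparison of $N_{eq}$ in \eqref{Neq} with $N_v$ in \eqref{Nv}, which after dividing by $d$ and cancelling the common $-2Kd$ term yields \eqref{feasibleIA1condition}, exactly as you compute. Your additional remark that the properness-implies-feasibility step rests on \cite{Yetis10} and channel genericity is more careful than the paper, which silently folds that implication into the theorem statement itself, but it does not change the substance of the argument.
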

\begin{proof}
  By comparing $N_{eq}$ in \eqref{Neq}  and $N_v$ in \eqref{Nv}, one can easily obtain \eqref{feasibleIA1condition}.
\end{proof}
\begin{remark}
Theorem \ref{thrmalg2} implies that for satisfying the WSLM IA feasible conditions, the number of antennas at the eavesdropper is restricted by
$N_e\leq\frac{K(M+N)-\left(K^2+1\right)d}{K-1}$. Since the number of antennas at the eavesdropper cannot be controlled in the proposed system, the secrecy can be enhanced when increasing the number of transceiver antennas $M$ and $N$.
\end{remark}
\subsection{ZFWS based IA design for secure multiuser MIMO systems}
In this subsection, we propose an IA design to force the information leakage to zero at the eavesdropper while keeping two conventional IA conditions in \eqref{ConventionalIA:SecrecyConditions1} and \eqref{ConventionalIA:SecrecyConditions2} satisfied. Condition \eqref{IA:SecrecyConditions3} can be rewritten as a zero-information-leakage constraint
\begin{align}\label{IA:zero-leakage-constrain}
\pmb{H}_{K+1,\ell}\pmb{F}_\ell = \mathbf{0}, \quad \forall \ell \in \mathcal{K}.
\end{align}
From Eq. \eqref{IA:zero-leakage-constrain}, we use a zero-forcing method to cancel all received signals at the eavesdropper. The key idea is to find the precoding matrices $\pmb{F}_\ell$  to force all the receiving signals to zero at the eavesdropper. It means that $\pmb{F}_\ell$ is the null space of $\pmb{H}_{K+1,\ell}$. To this end, we calculate the singular value decomposition (SVD) of $\pmb{H}_{K+1,\ell}$ as
\begin{align} \label{HE:SVD}
\pmb{H}_{K+1,\ell} = \pmb{\Phi}_\ell\pmb{\Sigma}_\ell\pmb{\Xi}_\ell^H
\end{align}
where $\pmb{\Sigma}_\ell$ is the diagonal matrix with diagonal elements being singular values in decreasing order. For the existence of $\pmb{F}_\ell$ satisfying Eq. \eqref{IA:zero-leakage-constrain}, we must have $M-N_e \geq d$. Define $\pmb{\Delta}_\ell \in \mathbb{C}^{M \times d}$ as the matrix of the last $d$ columns in the matrix $\pmb{\Xi}_\ell$, i.e., $\pmb{\Delta}_\ell$ lies on the null space of $\pmb{H}_{K+1,\ell}$. Hence, the precoder matrix $\pmb{F}_\ell$ is defined as $\pmb{F}_\ell=\pmb{\Delta}_\ell\pmb{P}_\ell$, where $\pmb{P}_\ell \in \mathbb{C}^{d\times d}$ is an arbitrary matrix. The total interference leakage $\mathfrak{J}$ in \eqref{ConventionalJ} can be rewritten as
\begin{align}\label{J:IA2}
\nonumber
 &\mathfrak{J}(\{\pmb{P}_k\}_{k=1}^K, \{\pmb{U}_k\}_{k=1}^{K}) =
 \\
 &\sum\limits_{k\in\K}\sum\limits_{\ell\in\K\setminus k}
 ||\pmb{H}_{k,\ell}\pmb{\Delta}_\ell\pmb{P}_\ell - \pmb{U}_k\pmb{U}_k^H\pmb{H}_{k,\ell}\pmb{\Delta}_\ell\pmb{P}_\ell ||_\emph{F}^2.
\end{align}
Then, the IA problem can be expressed as an optimization problem
\begin{subequations}\label{OptProIA2}
\begin{align}
 \underset{\{\pmb{P}_k\}_{k=1}^K,\{\pmb{U}_k\}_{k=1}^K}{\text{min}}  & \mathfrak{J}(\{\pmb{P}_k\}_{k=1}^K, \{\pmb{U}_k\}_{k=1}^{K})
 \\
 \text{s.t.} \qquad\,\,\, & \pmb{P}_k^H\pmb{P}_k=  \frac{P_t}{d}\pmb{I} \,\,\, \forall k \in \mathcal{K} \\
  & \pmb{U}_k^H\pmb{U}_k = \pmb{I}; \,\,\, \forall k \in \mathcal{K}.
\end{align}
\end{subequations}
We now derive the solution of problem \eqref{OptProIA2} via alternating minimization as follows \cite{peters09}.

\emph{Transmitter precoding design}:
The interference leakage power in \eqref{J:IA2} can be reformulated as
{\mathindent=1mm
\begin{align}\label{J:IA2:modified}
  \mathfrak{J}  =  \sum\limits_{k\in\K}\sum\limits_{\ell\in\K\setminus k}
  \mathrm{tr}\left\{\pmb{P}_\ell^H\pmb{\Delta}_\ell^H\pmb{H}_{k,\ell}^H(\pmb{I}- \pmb{U}_k\pmb{U}_k^H)\pmb{H}_{k,\ell}\pmb{\Delta}_\ell\pmb{P}_\ell\right\}.
\end{align}}
By holding $\{\pmb{U}_k\}_{k=1}^{K}$ fixed, we select the power allocation matrix $\pmb{P}_\ell$  as the optimal solution of the following optimization problem
{\mathindent=1mm
\begin{subequations}\label{OptProP}
\begin{align}
 \underset{\pmb{P}_\ell}{\text{min}}\,\,&
 \mathrm{tr}\{\pmb{P}_\ell^H(\sum_{k\in\K\setminus \ell}\pmb{\Delta}_\ell^H\pmb{H}_{k,\ell}^H(\pmb{I}- \pmb{U}_k\pmb{U}_k^H)\pmb{H}_{k,\ell}\pmb{\Delta}_\ell)\pmb{P}_\ell\}\\
 \text{s.t.} \,\,& \pmb{P}_\ell^H\pmb{P}_\ell=  \frac{P_t}{d}\pmb{I},
\end{align}
\end{subequations}}
which results in
{\mathindent=1mm
\begin{align}\label{Popt}
  \pmb{P}_\ell = \sqrt{\frac{P_{t}}{d}}\zeta_{\min}^d \{\sum\limits_{k\in\K\setminus \ell}\pmb{\Delta}_\ell^H\pmb{H}_{k,\ell}^H(\pmb{I}- \pmb{U}_k\pmb{U}_k^H)\pmb{H}_{k,\ell}\pmb{\Delta}_\ell\}.
\end{align}}

\emph{Receiver interference subspace selection}:
Now, $\left\{\pmb{P}_\ell\right\}_{\ell=1}^K$ is hold fixed, the orthonormal basis matrix $\pmb{U}_k$ at the legitimate Rx $k$ is selected for each Rx $k$ to ensure that most of interference falls into the interference subspace $\mathcal{U}_k$ spanned by $\pmb{U}_k$. In order to minimize the interference signals in $\mathfrak{J}$ in \eqref{J:IA2}, the matrix distance minimization problem is equivalent to
{\mathindent=1mm
\begin{subequations}\label{OptProUk}
\begin{align}
 \underset{\pmb{U}_k}{\text{max}} & \,\,\,\,\,\,\, \mathrm{tr}\{\pmb{U}_k^H(\sum\limits_{\ell\in\K\setminus k}\pmb{H}_{k,\ell}\pmb{\Delta}_\ell\pmb{P}_\ell\pmb{P}_\ell^H\pmb{\Delta}_\ell^H\pmb{H}_{k,\ell}^H)\pmb{U}_k\} \\
 \text{s.t.} &\,\,\,\,\,\,\, \pmb{U}_k^H\pmb{U}_k = \pmb{I}.
\end{align}
\end{subequations}}
Hence, the solution of \eqref{OptProUk} can be obtained by \cite{Lutke97}, given as below
\begin{align}\label{UkIA2}
  \pmb{U}_k = \zeta_{\max}^{N-d}\{\sum\limits_{\ell\in\K\setminus k}\pmb{H}_{k,\ell}\pmb{\Delta}_\ell\pmb{P}_\ell\pmb{P}_\ell^H\pmb{\Delta}_\ell^H\pmb{H}_{k,\ell}^H\}.
\end{align}
The step-by-step iterative algorithm for the ZFWS IA scheme is shown in Algorithm \ref{alg3}.
\begin{algorithm}[ht]
\caption{: Proposed ZFWS IA Algorithm for Secure Multiuser MIMO Systems}
\begin{algorithmic}[1]\label{alg3}
\STATE Inputs: $d,\pmb{H}_{k,\ell}$, $\forall k,\ell \in \mathcal{K}$, $\kappa=0$, $\kappa_{\max }$ where $\kappa$ is the iteration index;
\STATE Evaluate $\pmb{\Delta}_\ell$  by choosing the last $d$ columns of $\pmb{\Xi}_\ell$ in \eqref{HE:SVD}, $\forall \ell \in \mathcal{K}$;
\STATE Initial variables: Set $\{\pmb{U}_k^{(0)}\}_{k = 1}^{K}$ are identity matrices; then select $\{\pmb{P}_k^{(0)}\}_{k = 1}^{K}$ from \eqref{Popt};
\STATE Evaluate the objective function $\mathfrak{J}(\{\pmb{P}_k^{(0)}\}_{k=1}^K, \{\pmb{U}_k^{(0)}\}_{k=1}^{K})$ from \eqref{J:IA2:modified};
\WHILE{$\kappa < \kappa_{\max}$}
\STATE For fixed $\{\pmb{P}_k^{(\kappa)}\}_{k = 1}^{K}$, select $\{\pmb{U}_k^{(\kappa+1)}\}_{k = 1}^K$ from \eqref{UkIA2};
\STATE For fixed $\{\pmb{U}_k^{(\kappa+1)}\}_{k = 1}^K$, select $\{\pmb{P}_k^{(\kappa+1)}\}_{k = 1}^{K}$ from \eqref{Popt};
\STATE Evaluate the objective function $\mathfrak{J}(\{\pmb{P}_k^{(\kappa+1)}\}_{k=1}^K, \{\pmb{U}_k^{(\kappa+1)}\}_{k=1}^{K})$ from \eqref{J:IA2:modified};
\STATE $\kappa=\kappa+1$;
\STATE Repeat steps 6-9 until convergence.
\ENDWHILE
\end{algorithmic}
\end{algorithm}

\emph{Convergence analysis for the ZFWS IA design}:
The convergence of the second IA scheme is presented in the following theorem.
\begin{theorem}\label{thrmalg3}
The proposed ZFWS IA algorithm for secure multiuser MIMO communication networks converges monotonically.
\end{theorem}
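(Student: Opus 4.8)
The plan is to reuse the block-coordinate-descent (alternating minimization) argument that established Theorem \ref{thrmalg1}, now applied to problem \eqref{OptProIA2} with its two blocks of variables $\{\pmb{P}_k\}_{k=1}^{K}$ and $\{\pmb{U}_k\}_{k=1}^{K}$ and the single common objective $\mathfrak{J}$ in \eqref{J:IA2:modified}. Since the matrices $\pmb{\Delta}_\ell$ that zero-force the wiretapped signals are fixed once and for all in step 2 of Algorithm \ref{alg3}, the only quantities updated during the iterations are $\{\pmb{P}_k\}$ and $\{\pmb{U}_k\}$, and each iteration optimizes one block while the other is frozen.

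First I would record the descent produced by step 6. With $\{\pmb{P}_k^{(\kappa)}\}$ held fixed, the update \eqref{UkIA2} returns the global optimum of \eqref{OptProUk} in closed form; because the portion of $\mathfrak{J}$ arising from the identity term $\pmb{I}$ in $\pmb{I}-\pmb{U}_k\pmb{U}_k^H$ does not depend on $\pmb{U}_k$, maximizing $\mathrm{tr}\{\pmb{U}_k^H(\cdot)\pmb{U}_k\}$ is exactly minimizing $\mathfrak{J}$ over $\pmb{U}_k$ subject to $\pmb{U}_k^H\pmb{U}_k=\pmb{I}$. This yields
\begin{equation}
\mathfrak{J}(\{\pmb{P}_k^{(\kappa)}\},\{\pmb{U}_k^{(\kappa+1)}\})\leq\mathfrak{J}(\{\pmb{P}_k^{(\kappa)}\},\{\pmb{U}_k^{(\kappa)}\}).
\end{equation}
Next I would treat step 7: holding $\{\pmb{U}_k^{(\kappa+1)}\}$ fixed, the allocation update \eqref{Popt} is the global optimum of the trace-minimization subproblem \eqref{OptProP}, so
\begin{equation}
\mathfrak{J}(\{\pmb{P}_k^{(\kappa+1)}\},\{\pmb{U}_k^{(\kappa+1)}\})\leq\mathfrak{J}(\{\pmb{P}_k^{(\kappa)}\},\{\pmb{U}_k^{(\kappa+1)}\}).
\end{equation}
Chaining the two inequalities gives $\mathfrak{J}(\{\pmb{P}_k^{(\kappa+1)}\},\{\pmb{U}_k^{(\kappa+1)}\})\leq\mathfrak{J}(\{\pmb{P}_k^{(\kappa)}\},\{\pmb{U}_k^{(\kappa)}\})$, so the objective is nonincreasing along the iterates. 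To finish, I would note that $\mathfrak{J}$ is a sum of squared Frobenius norms in \eqref{J:IA2} and is therefore bounded below by zero on the feasible set; a monotonically nonincreasing sequence that is bounded below converges, which proves that Algorithm \ref{alg3} converges monotonically.

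The argument is almost entirely a transcription of the proof of Theorem \ref{thrmalg1}, so no step is genuinely hard; the one point I would verify carefully is the equivalence invoked in step 6, namely that the eigenvector maximization \eqref{OptProUk} really is minimization of the full $\mathfrak{J}$ over $\pmb{U}_k$. This rests on splitting $\pmb{I}-\pmb{U}_k\pmb{U}_k^H$ and observing that the discarded $\pmb{U}_k$-independent term is constant across the update, exactly as in the WSLM case. A minor bookkeeping difference worth flagging is that Algorithm \ref{alg3} updates $\pmb{U}_k$ before $\pmb{P}_k$, the reverse of Algorithm \ref{alg2}, so the two descent inequalities must be ordered accordingly; this reordering does not affect the conclusion.
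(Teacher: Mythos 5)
Your proposal is correct and is precisely the argument the paper intends: the paper omits the proof of Theorem~\ref{thrmalg3}, stating it is ``similar to'' the alternating-minimization descent proof of Theorem~\ref{thrmalg1} (the reference to Theorem~\ref{thrmalg2} in the paper is evidently a typo), and your write-up is a faithful expansion of exactly that argument, including the correct handling of the reversed update order (first $\pmb{U}_k$ via \eqref{UkIA2}, then $\pmb{P}_k$ via \eqref{Popt}) and the boundedness of $\mathfrak{J}$ below by zero.
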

\begin{proof}
The proof is similar to that of Theorem \ref{thrmalg2} and, thus, omitted.
\end{proof}

\emph{Feasible condition for the ZFWS IA scheme}:
Similar to the WSLM IA scheme, the feasibility of the ZFWS IA scheme relies on the number of the variables and that of the equations. The  properness of the second IA design is stated in the following theorem.
\begin{theorem}\label{thrmalg4}
The feasible condition of the proposed ZFWS IA algorithm for secure multiuser MIMO communication networks is $M-d\geq N_e$ and $N\geq Kd$.
\end{theorem}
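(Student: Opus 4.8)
The plan is to prove the two stated inequalities separately: $M-d\geq N_e$ will follow from the existence of the zero-forcing precoder structure introduced in \eqref{HE:SVD}, while $N\geq Kd$ will follow from the properness (variables-versus-equations) count of the residual alignment problem, in direct analogy with Theorem \ref{thrmalg2}.

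First I would pin down the existence of $\pmb{\Delta}_\ell$. Because $\pmb{H}_{K+1,\ell}\in\mathbb{C}^{N_e\times M}$ has i.i.d.\ Gaussian entries, it has full row rank $N_e$ almost surely, so $\dim\nul(\pmb{H}_{K+1,\ell})=M-N_e$. Selecting the $d$ columns of $\pmb{\Xi}_\ell$ that span a $d$-dimensional subspace of this null space — which is precisely what makes the zero-leakage constraint \eqref{IA:zero-leakage-constrain} hold for $\pmb{F}_\ell=\pmb{\Delta}_\ell\pmb{P}_\ell$ — is possible exactly when $M-N_e\geq d$, i.e.\ $M-d\geq N_e$. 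This recovers the first condition.

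Next, with the $\pmb{\Delta}_\ell$ fixed, the wiretap constraint is automatically met, and the only remaining alignment requirements are the conventional IMLI conditions \eqref{ConventionalIA:SecrecyConditions2}, now written for the effective channels $\pmb{H}_{k,\ell}\pmb{\Delta}_\ell\in\mathbb{C}^{N\times d}$ as $\pmb{W}_k^H\pmb{H}_{k,\ell}\pmb{\Delta}_\ell\pmb{P}_\ell=\mathbf{0}$ for $\ell\neq k$. Counting these column by column over the $K(K-1)$ ordered pairs gives $N_{eq}=K(K-1)d^2$ equations.

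The \emph{crux}, and the step I expect to be the main obstacle, is the variable count, because a naive reuse of the WSLM tally overcounts. The key observation is that $\pmb{P}_\ell\in\mathbb{C}^{d\times d}$ is square and full rank (it satisfies $\pmb{P}_\ell^H\pmb{P}_\ell=\frac{P_t}{d}\pmb{I}$), so it cannot alter $\col(\pmb{F}_\ell)=\col(\pmb{\Delta}_\ell)$; applying the rule that a $\pmb{V}\in\mathbb{C}^{m\times n}$ carries $n(m-n)$ free variables, $\pmb{P}_\ell$ contributes $d(d-d)=0$. Hence the only design freedom lies in the combiners $\pmb{W}_k\in\mathbb{C}^{N\times d}$, giving $N_v=Kd(N-d)$. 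Imposing properness $N_v\geq N_{eq}$ yields $Kd(N-d)\geq K(K-1)d^2$, i.e.\ $N-d\geq (K-1)d$, which is exactly $N\geq Kd$. Equivalently, one may argue geometrically that the interference seen at Rx $k$ occupies at most $(K-1)d$ dimensions of $\mathbb{C}^N$, so an interference-free $d$-dimensional receive subspace $\mathcal{W}_k$ exists if and only if $N\geq (K-1)d+d=Kd$. Combining the two conditions completes the proof.
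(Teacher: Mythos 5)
Your proof is correct and follows essentially the same route as the paper's: the condition $M-d\geq N_e$ comes from the existence of a $d$-dimensional precoder inside the null space of $\pmb{H}_{K+1,\ell}$, and $N\geq Kd$ comes from the properness count $N_v=K(N-d)d\geq N_{eq}=K(K-1)d^2$ for the residual IMLI conditions \eqref{ConventionalIA:SecrecyConditions2}. Your explicit justification that $\pmb{P}_\ell$ contributes zero free variables (because it is square and full rank, so it cannot change $\col(\pmb{F}_\ell)$) is exactly the ``removing superfluous variables'' step the paper delegates to its citation of Yetis et al., so you have simply filled in a detail the paper leaves implicit.
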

\begin{proof}
First, $M-d\geq N_e$ is the necessary condition for the existence of $\pmb{F}_\ell$ satisfying condition \eqref{IA:zero-leakage-constrain}. Exploiting the structure of the precoding matrix $\pmb{F}_\ell = \pmb{\Delta}_\ell\pmb{P}_\ell$  and removing superfluous variables \cite{yetis09}, the number of variables and equations in the IA  condition \eqref{ConventionalIA:SecrecyConditions2} are $N_v = K(N-d)d$  and $N_{eq} = K(K-1)d^2$, respectively.
%\begin{eqnarray}
%N_v = K(N-d)d, \label{Nv:IA2}\\
%N_{eq} = K(K-1)d^2.\label{Neq:IA2}
%\end{eqnarray}
The feasible condition is given by $N_v\geq N_{eq}$ which results in $N\geq Kd$.
\end{proof}

\begin{remark}
We now compare the constraints on the number of antennas required for proper systems of the proposed IA schemes.
Since the feasible condition of the ZFWS IA design is $N \geq Kd$,
the feasible condition of the WSLM IA design \eqref{feasibleIA1condition} can be rewritten as
{\mathindent=1mm
\begin{align}\label{feasibleIA1conditon:compared1}
  \frac{KM-d}{K-1} \leq \frac{K(M+N)-(K^2+1)d}{K-1},\,\,\,\forall N \geq Kd.
\end{align}}
It is noted that
\begin{equation}\label{feasibleIA1conditon:compared2}
 \frac{KM-d}{K-1} > M-d; \,\,\,\forall K \geq 2.
\end{equation}
Hence, from \eqref{feasibleIA1conditon:compared1} and \eqref{feasibleIA1conditon:compared2}, we have
\begin{align}\label{feasibleIA1conditon:compared3}
\nonumber
\frac{K(M+N)-(K^2+1)d}{K-1} &> M-d, \,\,\,
\\
\forall K \geq 2, \forall N \geq Kd.
\end{align}
It means that the upper bound of $N_e$ in the WSLM IA design is always higher than that of the ZFWS IA design.
\end{remark}
\section{Illustrative Results}
\label{sec:Results}
In this section, we evaluate the SSR performance of our two proposed IA designs through several numerical results, in comparison with the conventional IA algorithm \cite{peters09}. In simulations, noise variances are normalized $\sigma_k^2=\sigma^2=1$. The Rayleigh fading channel coefficients are generated from the complex Gaussian distribution ${\mathcal{CN}}(0,1)$. We define signal-to-noise-ratio $\text{SNR}=\frac{P_{t}}{\sigma^2}$. All the numerical results are averaged over the $200$ channel realizations.

First, we investigate the convergence characteristic of the two proposed IA algorithms.  We run the simulation for ${\left( 9 \times 9,6,3 \right)^3}$ and ${\left( 6 \times 6,4,2 \right)^3}$ systems with a random channel realization. Note that both systems are satisfied the feasible conditions in Theorems \ref{thrmalg2} and \ref{thrmalg4}. The evolution of the cost functions (SSR) over iterations is illustrated in Fig. \ref{fig:Convergence}. As can been seen from Fig. \ref{fig:Convergence}, the cost functions reduce monotonically over iterations and approach to zero. The second proposed IA design converges very fast in just few iterations while the first proposed IA design takes around $25$ iterations such that the objective value converges to $10^{-9}$.

\begin{figure}[htb!]
\includegraphics[keepaspectratio,width=3.6in]{./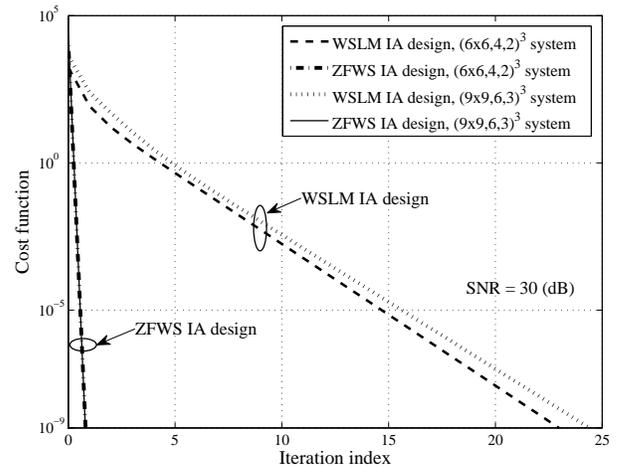}
\caption{The convergence behavior of the proposed IA algorithms.}
\label{fig:Convergence}
\end{figure}
\vspace{+1cm}

Now, we compare the SSR of our two proposed IA algorithms with that of the conventional IA method \cite{peters09} which does not involve the secrecy constraints. First, we consider $4$ scenarios of ${\left( 9 \times 9,6,3 \right)^3}$ , ${\left( 9 \times 9,9,3 \right)^3}$, ${\left( 15 \times 15,9,3 \right)^3}$ and ${\left( 15 \times 15,18,3 \right)^3}$  systems.
It should be noted that the WSLM IA feasible condition is satisfied in all of these systems but only the ${\left( 9 \times 9,6,3 \right)^3}$ and ${\left( 15 \times 15,9,3 \right)^3}$ systems satisfy the feasible conditions for the ZFWS IA scheme while the other systems do not. It has been revealed from Fig. \ref{fig:SSR1} and Fig. \ref{fig:SSR2} that for the ${\left( 9 \times 9,6,3 \right)^3}$ and ${\left( 15 \times 15,9,3 \right)^3}$ systems, the ZFWS IA scheme outperforms the WSLM IA one.

\begin{figure}[htb!]
\includegraphics[keepaspectratio,width=3.6in]{./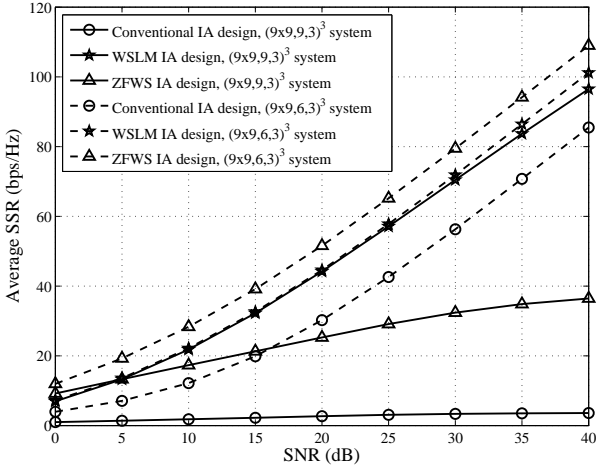}
\caption{The average SSR versus SNR for ${\left( 9 \times 9,6,3 \right)^3}$ and ${\left( 9 \times 9,9,3 \right)^3}$ systems.}
\label{fig:SSR1}
\end{figure}
\vspace{+1cm}

\begin{figure}[htb!]
\includegraphics[keepaspectratio,width=3.6in]{./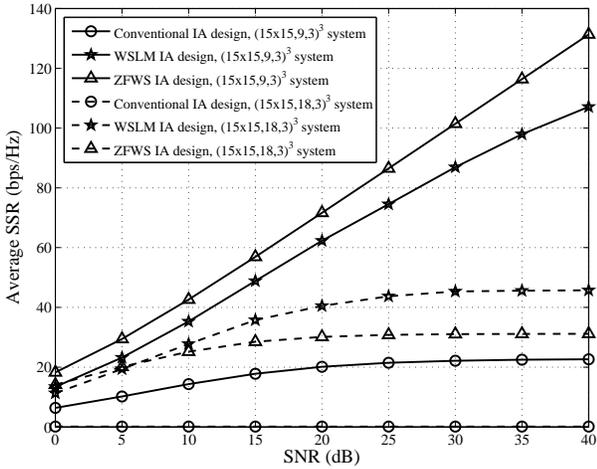}
\caption{The average SSR versus SNR for ${\left( 15 \times 15,9,3 \right)^3}$ and ${\left( 15 \times 15,18,3 \right)^3}$ systems.}
\label{fig:SSR2}
\end{figure}
\vspace{+1cm}

However, note that the constraint on the number of antennas at the eavesdropper in the ZFWS IA method is more restrictive than that of WSLM IA scheme. Thus, the ${\left( 9 \times 9,9,3 \right)^3}$ and ${\left( 15 \times 15,18,3 \right)^3}$ systems do not meet the feasible condition of the ZFWS IA scheme while still satisfying the feasible condition of the WSLM IA scheme. In two these configurations, the SSRs of the WSLM IA scheme are higher than the SSRs of the ZFWS IA one. It is also important to observe from Fig. \ref{fig:SSR1} and Fig. \ref{fig:SSR2} that both proposed IA designs significantly improve the SSR in comparison with the conventional IA design.

Finally, we investigate the SSR improvement of two proposed IA design in comparison with the conventional IA design versus various number of antennas at the eavesdropper ($N_e$). We define the average SSR improvement of each proposed IA design in comparison with the conventional IA design as follows
{\mathindent=1mm
\begin{align}\label{SSRImprovement}
  \mathcal{R}_S^{Imp} = \frac{1}{N_R}\sum\limits_{n=1}^{N_R}\left[\mathcal{R}_S^{IAproposed} - \mathcal{R}_S^{IAconventional}\right]
\end{align}}
where $N_R$ is the number of channel realizations, $\mathcal{R}_S^{IAproposed}$ and $\mathcal{R}_S^{IAconventional}$ are the SSRs of our proposed IA designs and the conventional IA design, respectively. Fig. \ref{fig:SSRImprovement} illustrates the average SSR improvements of two proposed IA designs in the ${\left( 9 \times 9,N_e,3 \right)^3}$ and ${\left( 15 \times 15,N_e,3 \right)^3}$ systems with $SNR=30$ dB. It can be observed that the SSR improvement is not significant as $N_e$ is too low or too high. The reasons for this, are that, when $N_e$ is high, the feasible conditions for IA schemes are not satisfied and, then, the secure communication is not feasible. On the other hand, when $N_e$ is low, the rate of wiretap channels is negligible and, thus, the SSR improvement is not significant. Additionally, Fig. \ref{fig:SSRImprovement} also reveals that the ZFWS IA scheme offers the better SSR improvement than the WSLM IA method as the feasible condition of the ZFWS IA is satisfied.  These results give an instruction for selecting an appropriate IA approach for given system parameters to gain the best secrecy.

\begin{figure}[htb!]
\includegraphics[keepaspectratio,width=3.6in]{./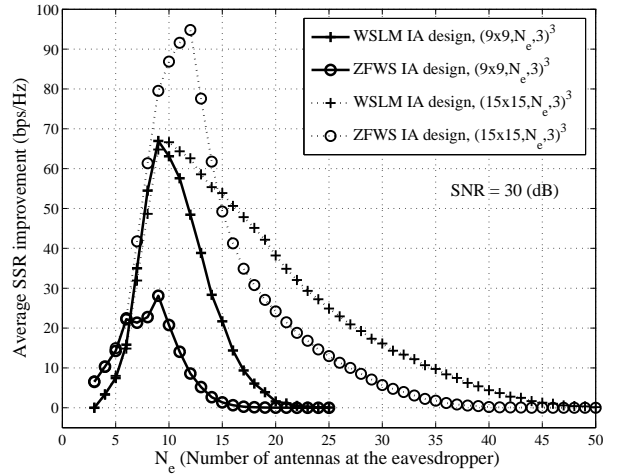}
\caption{The average SSR improvement versus the number of antennas at the eavesdropper for ${\left( 9 \times 9,N_e,3 \right)^3}$ and ${\left( 15 \times 15,N_e,3 \right)^3}$ systems.}
\label{fig:SSRImprovement}
\end{figure}
\vspace{+0.5cm}

\section{Conclusion}
\label{sec:Conclusion}
We have designed two IA schemes to enhance the secrecy of multiuser MIMO communication systems in flat-fading channels. Our two IA approaches have been designed to minimize the IMLI at all Rxs and the WSs at the eavesdropper by properly choosing the transmit precoding and receiving subspace matrices at the legitimate users. Our analysis provides key insights into the two proposed IA designs over the proof of convergence and feasible conditions. Although each proposed IA approach has its own advantages and disadvantages based on feasible conditions over the number of antennas at the eavesdropper, both proposed IA methods demonstrate significant SSR improvements in comparison with the conventional IA method.
\\
\\
\\

\section*{Acknowledgments}
This research is funded by Vietnam National Foundation for Science and Technology Development (NAFOSTED) under grant number 102.04-2013.46.

\balance
\bibliographystyle{IEEEtran}
\bibliography{IASecrecy2015}
\end{document}